\documentclass{article}

\usepackage{amsmath,amssymb,amsthm}

\parindent0pt

\newtheorem{definition}{Definition}[section]
\newtheorem{proposition}[definition]{Proposition}
\newtheorem{remark}[definition]{Remark}
\newtheorem{theorem}[definition]{Theorem}

\newtheorem{example}[definition]{Example}

\newtheorem{application}[definition]{Application}

\def\N{{\mathbb N}}
\def\R{{\mathbb R}}

\topmargin 0 pt
       \textwidth 434pt 
       \oddsidemargin 24pt
       \evensidemargin 10pt
       \marginparwidth 42pt

%\openup1\jot
%\sloppy
%\renewcommand{\textheight}{220mm}

\newcommand{\eps}{\varepsilon}

\newcommand{\ba}{\begin{array}} \newcommand{\ea}{\end{array}}
\newcommand {\bea} {\begin{eqnarray}} \newcommand {\eea} {\end {eqnarray}}
\newcommand{\beq}{\begin{equation}} \newcommand{\eeq}{\end{equation}}
\newcommand{\be}{\begin{enumerate}} \newcommand{\ee}{\end{enumerate}}
\newcommand {\bua} {\begin{eqnarray*}}
\newcommand {\eua} {\end {eqnarray*}}

\newcounter{ct}

\newcommand{\ds}{\displaystyle}

\newcommand{\ra}{\rightarrow}

\newcommand{\se}{\subseteq}

\newcommand{\remin}{\mathop{-\!\!\!\!\!\hspace*{1mm}\raisebox{0.5mm}{$
\cdot$}}\nolimits}

\let\OLDthebibliography\thebibliography
\renewcommand\thebibliography[1]{
  \OLDthebibliography{#1}
  \setlength{\parskip}{0pt}
  \setlength{\itemsep}{0pt plus 0.3ex}
}

\newcommand{\A}{S}

\begin{document}

\title{Fej\'er monotone sequences revisited}

\author{Ulrich Kohlenbach and Pedro Pinto \\ 
Department of Mathematics \\  Technische Universit\" at Darmstadt\\ 
Schlossgartenstra\ss{}e 7, 64289 Darmstadt, Germany \\ 
kohlenbach@mathematik.tu-darmstadt.de, pinto@mathematik.tu-darmstadt.de}
\maketitle 

\begin{abstract}
  In this paper we introduce a localized and relativized generalization of
  the usual concept of Fej\'er monotonicity together with uniform and  quantitative versions thereof and show that the main quantitative results obtained by the 1st author together with Nicolae and Leu\c{s}tean in 2018 and with L\'opez-Acedo and Nicolae in 2019 respectively, extend to this generalization. Our framework, in particular, covers the sequence generated by the Dykstra algorithm while the latter is not Fej\'er-monotone in the ordinary sense. This gives a theoretical explanation why under a metric regularity assumption one obtains an explicit rate of convergence for Dykstra's algorithm which was proved recently by the 2nd author.
\end{abstract}

\section{Introduction}

Let $X$ be a real Banach space, $C\subseteq X$ a nonempty subset and $(x_n)$ a sequence of points in $X$. An important feature enjoyed by many iterative methods in convex optimization is that of Fej\'er monotonicity: $(x_n)$ is Fej\'er monotone w.r.t.\ $C$ if
\[
\forall p\in C\ \forall n\in \N \left( \| x_{n+1}-p\|\leq \| x_n-p\| \right).
\]
Fej\'er monotone algorithms are frequently favored due to their good asymptotic behavior but, in general are only weakly convergent to some point in the set $C$. As most applications are naturally restricted to a finite dimensional setting, weak convergence gets then upgraded into strong convergence. Nevertheless, the general consensus appears to be that Fej\'er monotone methods are weakly convergent, and that strong convergence will prevent Fej\'er monotonicity since the sequence must eventually exhibit a preference towards some point in $C$.\\

In \cite{K-L-N,K-Lo-N}, in a metric setting, the property of Fej\'er monotonicity was extensively studied through the lenses of proof mining techniques. In \cite{K-L-N}, it was shown that under a compactness assumption one can construct a rate of metastability in the sense of Tao for $(x_n)$ if a quantitative version of Fej\'er monotonicity holds. 
In \cite{K-Lo-N}, a notion of modulus of regularity was introduced. It was shown that the availability of a modulus of regularity for a Fej\'er monotone sequence will entail the construction of a rate of convergence without any compactness assumption and using just the usual non-quantitative form of Fej\'er monotonicity. This points to the nontriviality of some metric regularity assumptions in the literature, as it is known that already for fairly simple algorithms no computable rate of convergence exists. That notwithstanding, the metric regularity assumption pertains to the specifics of the underlying set $C$ and in instances where the set $C$ enjoys nice geometry properties, a modulus of regularity (and -- correspondingly -- a rate of convergence) turns out to be available.\\

Recently~\cite{Pinto23}, the second author provided a proof-theoretical analysis of the strong convergence of Dykstra's algorithm of cyclic projections. Strikingly, even here without the sequence being Fej\'er monotone, the existence of a modulus of regularity allowed for rates of convergence. Since the iteration fails to be Fej\'er monotone, this result escapes the reach of the general results in \cite{K-Lo-N}. 

In this paper, we introduce generalized notions of {\em locally relativized Fej\'er monotonicity} and show that the main results in \cite{K-L-N,K-Lo-N} extend to 
these notions. In particular, this provides a theoretical justification for \cite[Section~4]{Pinto23}.

\section{Locally $\A$-relativized Fej\'er monotone sequences}
\label{section-approximate}

In the following, $(X,d)$ is a metric space and $F\subseteq X$ a nonempty subset. As in \cite{K-L-N}, whose notations we follow, we assume that  
\[
F=\bigcap_{k\in\N}\tilde{F}_k,
\]
where $\tilde{F}_k\se X$ for every $k\in\N$ and we say that the family  $(\tilde{F}_k)$ is a {\em representation} of $F$. It is clear that $F$ has a trivial representation, by letting $\tilde{F}_k:=F$ for all $k$ but the intended interpretation is that 
\[
AF_k:=\bigcap_{l\le k}\tilde{F}_l
\]
is some weakened approximate form of $F$. A point $p\in AF_k$ is said to be a {\em $k$-approximate $F$-point}.\\[1mm]
In the following we always view $F$ not just as a set but we suppose it is equipped with a representation $(\tilde{F}_k)$ to which we refer implicitly in many of the notations introduced below.  

Let $(x_n)$ be a sequence in $X$ and $\A(n,k)$ be an arbitrary property about $(n,k)\in\N^2$. We think of $k$ as an error $\delta>0$ via $\delta=\frac{1}{k+1}.$ Throughout this paper we assume that $\A(n,k)$ is monotone in $\delta$ in the sense of
\[
\forall n,k_1,k_2\in\N \left(k_1\le k_2\wedge \A(n,k_2)\to \A(n,k_1)\right).
\]
Note that if $\A$ does not satisfy this we may replace it by $\tilde{\A}(n,k):=\bigwedge^{k}_{i=0} \A(n,i)$.

\begin{definition}
$(x_n)$ has approximate $F/\A$-points if $\forall k\in\N \,\exists n\in \N \left(x_n\in AF_k \wedge \A(n,k)\right)$.\\
A function $\Phi:\N\to\N$ is called an approximate $F/\A$-bound if
\[
\forall k\in \N\,\exists n\le \Phi(k) \left(x_n\in AF_k\wedge \A(n,k)\right).
\]
We assume w.l.o.g.\ that $\Phi$ is nondecreasing since, otherwise, we may take\\ $\Phi^M(k):=\max\{ \Phi(i):i\le k\}$.
\end{definition}

\begin{definition}[\cite{K-L-N}] \label{explicit-closed}
We say that  $F$ is {\em explicitly closed} (w.r.t.\ the representation $(\tilde{F}_k)$) if 
\[
\forall p\in X \left( \forall 
N,M\in\N \left(AF_M\cap \overline{B}\left(p, \frac{1}{N+1}\right)\ne \emptyset\right)\ra p\in F\right).
\]
\end{definition}

As in \cite[Section~4]{K-L-N}, let $G,H:(0,\infty)\to (0,\infty)$ be functions which possess moduli $\alpha_G,\beta_H:\N\to\N$ such that 
\[
\forall k\in\N \, \forall a\in (0,\infty)
\left( a\le \frac{1}{\alpha_G(k)+1} \to G(a) \le \frac{1}{k+1}\right)  
\]
and
\[ 
 \forall k\in\N \, \forall a\in (0,\infty) 
\left(H(a)\le \frac{1}{\beta_H(k)+1} \to a \le  \frac{1}{k+1}\right).
\]

Let $(x_n)$ be a sequence in the metric space $(X,d)$ and $\emptyset \ne F\se X$.\\
The next definition generalizes the concept of $(G,H)$-Fej\'er monotonicity from \cite[Definition~4.1]{K-L-N}:

\begin{definition}\label{fejer1} 
$(x_n)$ is locally $\A$-relativized {\em $(G,H)$-Fej\'er monotone} w.r.t.\ $F$ if 
\[
(*)\left\{ \ba{l} \forall p\in F\,\forall r\in\N\,\exists m\in\N\,\forall n\in\N \\
\hspace*{5mm} \left( d(x_n,p)<\frac{1}{m+1}\wedge \A(n,m)\to 
\forall l\in\N\,(H(d(x_{n+l},p))\le G(d(x_n,p))+\frac{1}{r+1})\right). 
\ea \right.
\]
\end{definition}

\begin{remark}\rm
If $(x_n)$ is $(G,H)$-Fej\'er monotone w.r.t.\ $F$ in the sense of \cite[Definition~4.1]{K-L-N}, then it, in particular, is locally $\A$-relativized {\em $(G,H)$-Fej\'er monotone} w.r.t.\ $F$ for any property $\A(n,m)$.
\end{remark}

The next theorem generalizes \cite[Proposition~4.3]{K-L-N} to locally $\A$-relativized {\em $(G,H)$-Fej\'er monotone} sequences:

\begin{theorem}\label{general-locally-fejer} 
Let $X$ be a compact metric space and $F$ be explicitly closed. Assume that $(x_n)$ is locally $\A$-relativized $(G,H)$-Fej\'er monotone with respect to $F$ and that $(x_n)$ has approximate $F/\A$-points. Then $(x_n)$ converges to a point $x\in F$. 
\end{theorem}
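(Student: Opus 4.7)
The plan is to combine a compactness argument with the approximate $F/\A$-points to extract a candidate limit, then use explicit closedness to show the limit is in $F$, and finally use the locally $\A$-relativized $(G,H)$-Fej\'er property at this limit to show that the entire sequence converges to it.

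\textbf{Extraction of a candidate limit.} By the approximate $F/\A$-points hypothesis, for each $k\in\N$ pick $n_k$ with $x_{n_k}\in AF_k$ and $\A(n_k,k)$. By compactness of $X$, extract a subsequence $(x_{n_{k_j}})_j$ converging to some $x\in X$ (with $k_j\to\infty$).

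\textbf{The limit lies in $F$.} Given $N,M\in\N$, choose $j$ with $k_j\ge M$ and $d(x_{n_{k_j}},x)\le \tfrac{1}{N+1}$. Since $(AF_k)_k$ is nonincreasing (as $AF_k=\bigcap_{l\le k}\tilde F_l$), $x_{n_{k_j}}\in AF_{k_j}\se AF_M$, so $AF_M\cap\overline{B}(x,\tfrac{1}{N+1})\ne\emptyset$. By explicit closedness, $x\in F$.

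\textbf{Convergence of the full sequence to $x$.} Fix $k\in\N$; I will exhibit $N_0$ such that $d(x_N,x)\le\tfrac{1}{k+1}$ for all $N\ge N_0$. Apply Definition~\ref{fejer1} with $p=x\in F$ and $r:=2\beta_H(k)+1$ to obtain $m\in\N$. Set $K:=\max\{m,\alpha_G(2\beta_H(k)+1)\}$ and pick $j$ with $k_j\ge K$ and $d(x_{n_{k_j}},x)\le\tfrac{1}{K+1}$. By monotonicity of $\A$ in its second argument, $\A(n_{k_j},m)$ holds, and certainly $d(x_{n_{k_j}},x)<\tfrac{1}{m+1}$. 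The Fej\'er property then yields, for every $l\in\N$,
\[
H(d(x_{n_{k_j}+l},x))\le G(d(x_{n_{k_j}},x))+\tfrac{1}{r+1}.
\]
By the modulus $\alpha_G$ and $d(x_{n_{k_j}},x)\le\tfrac{1}{\alpha_G(2\beta_H(k)+1)+1}$, we get $G(d(x_{n_{k_j}},x))\le\tfrac{1}{2\beta_H(k)+2}$, so the right-hand side is bounded by $\tfrac{1}{\beta_H(k)+1}$. The modulus $\beta_H$ then forces $d(x_{n_{k_j}+l},x)\le\tfrac{1}{k+1}$, and taking $N_0:=n_{k_j}$ completes the argument.

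The only delicate point is the bookkeeping of the moduli $\alpha_G$ and $\beta_H$ together with the Fej\'er-monotonicity constant $m$, combined into the threshold $K$; conceptually this is a direct generalization of the classical Fej\'er argument. The local/relativized hypotheses remain usable precisely because the chosen subsequence $(x_{n_{k_j}})$ achieves both the metric smallness $d(x_{n_{k_j}},x)<\tfrac{1}{m+1}$ (by convergence) and the predicate $\A(n_{k_j},m)$ (via monotonicity of $\A$ once $k_j\ge m$).
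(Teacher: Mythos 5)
Your proof is correct and follows essentially the same route as the paper's: extract a limit $x$ of the approximate $F/\A$-points by compactness, use explicit closedness to place $x\in F$, and then apply the locally $\A$-relativized Fej\'er property at $p:=x$ with the moduli $\alpha_G,\beta_H$ (your threshold $K$ plays the role of the paper's enlarged $m$, and your inlined subsequence argument is exactly the paper's adherent-point step together with the monotonicity of $\A$). The only trivial slip is that choosing $d(x_{n_{k_j}},x)\le \frac{1}{K+1}$ with $K\ge m$ yields only $d(x_{n_{k_j}},x)\le\frac{1}{m+1}$ rather than the strict inequality required in Definition~\ref{fejer1}; choosing $j$ with $d(x_{n_{k_j}},x)<\frac{1}{K+1}$ fixes this.
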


\begin{proof}
For each $k\in\N$ let $n_k\in\N$ be such that
\[
x_{n_k}\in AF_k\wedge \A(n_k,k)
\]
and define $y_k:=x_{n_k}$. Since $X$ is compact, $(y_k)$ has a convergent subsequence $(y_{k_l})$. Let $x:=\lim y_{k_l}$. Similarly to \cite{K-L-N} one shows that 
\begin{enumerate}
\item[$(i)$] $x$ is an adherent point of $S_{k}:=\{ x_n:n\in\N\wedge \A(n,k)\}$ for all $k\in\N$ and
\item[$(ii)$] (using that $F$ is explicitly closed) $x\in F$.
\end{enumerate}
\textbf{Proof of $(i)$:} Let $k,m\in\N$. We want to show that
\[
\exists y\in S_k \left( d(x,y)\le\frac{1}{m+1}\right).
\]
Let $l\ge k$ be so large that $d(y_{k_l},x)\le \frac{1}{m+1}$. By construction
\[
y_{k_l}=x_{n_{k_l}}\in AF_{k_l} \wedge \A(n_{k_l},k_l).
\]
Since $(y_{k_l})$ is a subsequence of $(y_k)$ we have that $k_l\ge l\ge k$ and so by the monotonocity properties of $AF_K$ and $\A(n,K)$ in $K\in\N$
\[
y_{k_l}\in AF_k\wedge \A(n_{k_l},k).
\]
Hence we may take $y:=y_{k_l}$. \hfill$\blacksquare$\\

\noindent\textbf{Proof of $(ii)$:} By the proof of $(i)$, the premise of the property of $F$ being `explicitly closed' is satisfied for $p:=x$ and so $x\in F$. \hfill$\blacksquare$\\

\noindent We now show using $(i)$ and $(ii)$ that $\lim x_n=x$: let $r\in\N$ be arbitrary and define $r':=2\beta_H(r)+1$. Let for this $r'$ and $p:=x\in F$ be $m$ as in Definition~\ref{fejer1} and also satisfying that
\[
m\ge \max\left\{\alpha_G\left(2\beta_H(r)+1\right),2\beta_H(r)+1\right\}
\]
(this is possible to achieve by the monotonicity of $\A(n,m)$ in $m$). By $(i)$,
\[
\exists n\in\N \left( d(x_n,x)<\frac{1}{m+1}\wedge \A(n,m)\right).
\] 
Hence by Definition~\ref{fejer1} applied to this $n$:
\[
\forall l\in\N \left( H(d(x_{n+l},x))\le G(d(x_n,x))+\frac{1}{2\beta_H(r)+2}
\stackrel{\alpha_G\mbox{-def.}}{\le} \frac{1}{\beta_H(r)+1}\right)
\]
and so by $\beta_H$-definition 
\[
\forall l\in\N \left(d(x_{n+l},x)\le \frac{1}{r+1}\right).\qedhere
\]
\end{proof}

\begin{application}\label{application1} \rm
Let $C_1,\ldots,C_N\subseteq H$ be closed and convex subsets of a (real) Hilbert space with
\[
C:=\bigcap^N_{i=1} C_i\not=\emptyset
\]
and $P_i$ be the metric projection onto $C_i$ for $i=1,\ldots,N$. Let $(x_n),(q_n)$ be the sequences in $H$ generated by Dykstra's algorithm with $x_0$ as starting point. Let $z\in C$ and $\N\ni b>0$ with $\| z-x_0\|\le b$. Then, by \cite[Lemma~3.4]{Pinto23}, $(x_n)\subset \overline{B}(z,b)$. Define 
\[
f(x):=\max\limits_{i=1,\ldots,N} \| x-P_ix\|.
\] 
Note that $C= \bigcap^{\infty}_{k=0}AF_k(:=\{ x\in X : |f(x)|\le\frac{1}{k+1})\})=\text{zer}\;f$ is explicitly closed.\\ 
We also define $G(a):=H(a):=a^2$ and may take as in \cite[Lemma~7.10]{K-L-N}
\[
\alpha_G(k):=\lceil\sqrt{k}\rceil,\ \beta_H(k):=k^2.
\]
Now define (where for negative indices $k$ we take $x_k$ as arbitrary points in $H$ while $q_k:=0$) 
\[
\A(n,r):=\left(\sum^{n}_{k=n-N+1}\langle x_k-x_n,q_k\rangle <\frac{1}{r+1}\right).
\]
Since $(x_n)$ is locally $\A$-relativized $(G,H)$-Fej\'er monotone w.r.t.\ $C$ and possesses approximate $C/\A$-points (these facts are proven in stronger form in Application~\ref{application2} below), Theorem~\ref{general-locally-fejer} (applied to $X:=C\cap\overline{B}(z,b)$) gives a simple proof of the convergence of $(x_n)$ towards a point in $C$ if $H$ is finite dimensional (compared to the much more complicated strong convergence proof known in the general case, see e.g. \cite{BauCom}).
\end{application}

\begin{application}\label{new-application}\rm 
Let $(X,d)$ be compact metric space and $F$ be explicitly closed. For each $r\in\N$, consider the subset $F_r\subseteq F$ of points $p$ in $F$ which satisfy
\[
\exists m\in\N\,\forall n\in\N \left( d(x_n,p)<\frac{1}{m+1}\wedge \A(n,m)\to \forall l\in\N\,(H(d(x_{n+l},p))\le G(d(x_n,p))+\frac{1}{r+1})\right).
\]
Assume that
\[
\forall r\in \N\, \forall q\in F\setminus F_r\,\exists N \in\N \, \forall n \in \N \left( \A(n,N)\to d(x_n,q)\ge \frac{1}{N+1}\right).
\]
If $(x_n)$ has approximate $F/\A$-points, then $(x_n)$ strongly converges to a point in $F$: it suffices to show that $(x_n)$ is locally $\A$-relativized $(G,H)$-Fej\'er monotone w.r.t.\ $F$. Let $r\in\N$ and $p\in F$. If $p\in F_r$, then $(*)$ holds by definition of $F_r$. If $p\not\in F_r$, then $(*)$ holds for $m:=N$ since $\A(n,m)$ implies that $d(x_n,p)\ge \frac{1}{m+1}$.
\end{application}

%\mbox{ }\\ 

By Theorem \ref{general-locally-fejer} a sequence $(x_n)$ strongly converges to a point $x\in F$ (for explicitly closed $F$) if for some condition $\A$, it is locally $\A$-relativized $(G,H)$-Fej\'er monotone and has approximate $F/\A$-points. We now show that also the converse holds (with $G:=H:=Id)$ if $F$ satisfies the following condition (which in most applications is trivially satisfied, e.g. when $F:=\text{zer}\;f$ for some continuous $f:X\to \R$ and $AF_k:=\{ x\in X: |f(x)|\le \frac{1}{k+1} \}$).

\begin{definition}\label{proper-approximation} 
We say that $F$ is properly approximated by $(AF_k)$ if 
\[
\forall x\in F\,\forall k\in\N\,\exists m\in\N\, \forall y\in X \left(d(x,y)\le\frac{1}{m+1}\to y\in AF_k\right).
\]
\end{definition}

\begin{proposition}
If $F$ is properly approximated by $(AF_k)$ and $(x_n)$ converges to $x\in F$, then for a suitable condition $\A$, the sequence $(x_n)$ is locally $\A$-relativized $(Id,Id)$-Fej\'er monotone and has approximate $F/\A$-points.
\end{proposition}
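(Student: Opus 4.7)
The plan is to pick $\A$ so that it directly encodes the uniform Cauchy behaviour of the tail of $(x_n)$; this turns the Fej\'er inequality into a one-line triangle-inequality estimate. Concretely, I would take
\[
\A(n,k):=\forall l\in\N\bigl(d(x_{n+l},x_n)\le\tfrac{1}{k+1}\bigr).
\]
Monotonicity in $k$ is immediate: if $k_1\le k_2$ and $\A(n,k_2)$ holds, then for every $l$, $d(x_{n+l},x_n)\le\tfrac{1}{k_2+1}\le\tfrac{1}{k_1+1}$. Note we only use that $(x_n)$ converges, hence is Cauchy; completeness of $X$ plays no role.

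For the existence of approximate $F/\A$-points, fix $k\in\N$. Applying Definition~\ref{proper-approximation} at the point $x\in F$ and index $k$ yields $M\in\N$ such that every $y$ with $d(x,y)\le\tfrac{1}{M+1}$ belongs to $AF_k$. Since $x_n\to x$, choose $N\in\N$ with $d(x_n,x)\le\min\{\tfrac{1}{M+1},\tfrac{1}{2(k+1)}\}$ for all $n\ge N$. Then $x_N\in AF_k$, and for each $l\in\N$ the triangle inequality gives
\[
d(x_{N+l},x_N)\le d(x_{N+l},x)+d(x,x_N)\le\tfrac{1}{k+1},
\]
so $\A(N,k)$ holds. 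Hence $n:=N$ witnesses the claim for this $k$.

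For the locally $\A$-relativized $(Id,Id)$-Fej\'er monotonicity, given $p\in F$ and $r\in\N$, simply take $m:=r$. If $\A(n,m)$ holds, then for every $l\in\N$,
\[
d(x_{n+l},p)\le d(x_{n+l},x_n)+d(x_n,p)\le d(x_n,p)+\tfrac{1}{r+1},
\]
which is exactly the required bound $H(d(x_{n+l},p))\le G(d(x_n,p))+\tfrac{1}{r+1}$ with $G=H=Id$. Note that the first conjunct $d(x_n,p)<\tfrac{1}{m+1}$ of the premise in Definition~\ref{fejer1} is not even needed here.

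The only conceptual hurdle is the choice of $\A$. The naive candidate $\A(n,k):=d(x_n,x)\le\tfrac{1}{k+1}$ controls $x_n$ itself but says nothing about the subsequent iterates $x_{n+l}$, so one would have to re-invoke convergence inside the Fej\'er argument and calibrate $m$ against its modulus. Folding the tail Cauchy-estimate into $\A$ from the outset sidesteps this, and the proper-approximation hypothesis is used only in the verification of approximate $F/\A$-points to ensure that sufficiently late iterates actually land in $AF_k$.
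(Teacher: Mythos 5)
Your proposal is correct and follows essentially the same route as the paper: the paper takes $\A(n,r):=\forall k\ge n\,(d(x_k,x)<\frac{1}{2r+2})$, which via the triangle inequality through $x$ yields exactly your tail estimate $d(x_{n+l},x_n)\le\frac{1}{r+1}$, and then argues precisely as you do (Fej\'er monotonicity with $m:=r$ by the triangle inequality, approximate $F/\A$-points from proper approximation plus convergence). Your centering of $\A$ at $x_n$ instead of at the limit $x$ is only a cosmetic variant.
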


\begin{proof} Let $x\in F$ and assume that $(x_n)$ converges to $x.$
Take
\[
\A(n,r):=\forall k\ge n \left( d(x_k,x)<\frac{1}{2r+2}\right).
\]
Then clearly for all $p\in X, n,r\in\N$ 
\[
\A(n,r)\to \forall l\in\N \left( d(x_{n+l},p)\le d(x_{n+l},x_n)+d(x_n,p)\le d(x_n,p)+\frac{1}{r+1}\right)
\]
and so $(x_n)$ satisfies $(*)$ with $m:=r$. Moreover, $(x_n)$ has approximate $F/\A$-points: given $r\in\N$, let $m$ be so large that $d(x,y)\le \frac{1}{2m+2}$ implies $y\in AF_r$. Take $n\in\N$ be so large that $\A(n,\max\{ m,r\})$. Then $\A(n,r)$ and $x_n\in AF_r$.
\end{proof}

The relevance of Theorem \ref{general-locally-fejer}, of course, stems from the fact that in applications $\A$ will be such that it is much easier to construct an approximate $F/\A$-point bound than a rate of convergence for $(x_n)$ (which is needed for the specific $\A$ used in the proof above).

\section{Uniform  locally $\A$-relativized $(G,H)$-Fej\'er monotone sequences w.r.t.  $F$-points}
We now strengthen the Definition~\ref{fejer1} by demanding that `$\exists m\in\N$' is uniform w.r.t. $p\in F$.
\begin{definition}\label{fejer2}
A sequence $(x_n)$ in $X$ is uniformly locally $\A$-relativized $(G,H)$-Fej\'er monotone w.r.t. $F$ with modulus $\rho:\N\to\N$ if 
\[
\ba{l} \forall p\in F\,\forall r,n\in\N\\
\hspace*{1cm}\left( d(x_n,p)< \frac{1}{\rho(r)+1}\wedge \A(n,\rho(r))\to \forall l\in\N\ (H(d(x_{n+l},p))\le G(d(x_n,p))+ \frac{1}{r+1}\right). \ea
\]
\end{definition}

\begin{remark}\rm 
 Even with this strengthened version, any ordinary $(G,H)$-Fej\'er monotone sequence in the sense of \cite[Definition~4.1]{K-L-N} (and hence by taking $G:=H:=Id$ any Fej\'er monotone sequence) satisfies the above definition for any choice of $\A(n,r)$ and any $\rho$.
\end{remark}

Let $f:X\to\overline{\R}:=\R\cup\{+\infty\}$ an arbitrary function with zer $f:=\{ x\in X:f(x)=0\}\neq \emptyset$. We recall the following definition from \cite{K-Lo-N} (but written in $1/(k+1)$-form rather than with $\eps>0$):
\begin{definition}[{\cite[Definition~3.1]{K-Lo-N}}]
Let $z \in \text{zer}\; f$ and $b > 0$. Then $\mu :\N  \to \N$ is a \textit{modulus of regularity} for $f$ w.r.t.\ $\text{zer}\; f$ and $\overline{B}(z,b)$ if for all $k\in\N$ and $x \in \overline{B}(b,r)$ we have the following implication
\[
|f(x)| < \frac{1}{\mu(k)+1} \; \rightarrow \; \mbox{\rm dist}(x, \text{zer}\; f) < \frac{1}{k+1}.
\]
\end{definition}

Let $b>0$ and $(x_n)$ be a sequence in $\overline{B}(z,b)$ for some $z\in {\rm zer} f:=\{ x\in X : f(x)=0\}$ and $\mu$ be a modulus of regularity w.r.t. $\text{zer}\; f$ and $\overline{B}(z,b)$.\\ 
Let $\Phi:\N\to\N$ be an approximate $F/\A$-bound for $(x_n)$ with $F:=\text{zer}\; f=\bigcap^{\infty}_{k=0} AF_k$, where $AF_k:=\{ x\in X:|f(x)|\le\frac{1}{k+1})\},$ more precisely
\[
\forall k\in\N\,\exists n\le\Phi(k) \left(|f(x_n)|<\frac{1}{k+1}\wedge \A(n,k)\right).
\]
Recall that we assume $\A(n,m)$ to be monotone in $m.$

\begin{theorem}\label{theorem-fejer2}
Let $\mu$ be a modulus of regularity w.r.t. $F:=\text{zer}\;f$ and $\overline{B}(z,b)$. If $(x_n)$ is a sequence in $\overline{B}(z,b)$ which is uniformly locally $\A$-relativized $(G,H)$-Fej\'er monotone w.r.t.\ $F$ with modulus $\rho$ and if $\Phi$ is an approximate $F/\A$-bound for $(x_n)$, then $(x_n)$ is a Cauchy sequence with rate $\Psi(2k+1)$ for
\[
\Psi(k):=\Phi\left(\max\left\{ \rho(2\beta_H(k)+1),\mu\left(\max\left\{\alpha_G(2\beta_H(k)+1),\rho(2\beta_H(k)+1)\right\}\right)\right\} \right), \mbox{i.e.}
\]
\[
\forall k,m,\tilde{m}\in\N \left( m,\tilde{m}\ge \Psi(2k+1)\to \left(\| x_m-x_{\tilde{m}}\| \le\frac{1}{k+1}\right)\right)
\]
and 
\[
\forall k\in\N\,\forall n\ge\Psi(k) \left(\text{\rm dist}(x_n,\text{zer}\;f) \le \frac{1}{k+1}\right).
\]
Moreover, if $X$ is complete and $\text{zer}\;f$ is closed, then $(x_n)$ converges to a zero of $f$ with rate of convergence $\Psi(2k+1)$.
\end{theorem}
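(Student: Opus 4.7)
The core step is, for each accuracy $k$, to produce an index $n_0 \le \Psi(k)$ together with a witness $p \in \text{zer}\,f$ such that \emph{every} subsequent iterate $x_{n_0+l}$ satisfies $d(x_{n_0+l},p) \le \frac{1}{k+1}$. All three claimed conclusions will follow routinely from this.

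Set $r_k := 2\beta_H(k)+1$, $K_k := \max\{\alpha_G(r_k),\rho(r_k)\}$, and $M_k := \max\{\rho(r_k),\mu(K_k)\}$, so that $\Psi(k) = \Phi(M_k)$. Applying the approximate $F/\A$-bound $\Phi$ at $M_k$ yields some $n_0 \le \Psi(k)$ with $|f(x_{n_0})| < \frac{1}{M_k+1} \le \frac{1}{\mu(K_k)+1}$ and $\A(n_0,M_k)$. Since $x_{n_0} \in \overline{B}(z,b)$, the modulus of regularity produces some $p \in \text{zer}\,f$ with $d(x_{n_0},p) \le \frac{1}{K_k+1}$. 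By monotonicity of $\A$ in its second argument, $\A(n_0,\rho(r_k))$ also holds, and $d(x_{n_0},p) \le \frac{1}{\rho(r_k)+1}$, so the uniform local $\A$-relativized $(G,H)$-Fej\'er property at $r=r_k$ gives, for every $l\in\N$,
\[
H(d(x_{n_0+l},p)) \le G(d(x_{n_0},p)) + \tfrac{1}{r_k+1}.
\]
Since $d(x_{n_0},p) \le \frac{1}{\alpha_G(r_k)+1}$, the defining property of $\alpha_G$ forces $G(d(x_{n_0},p)) \le \frac{1}{r_k+1}$. Hence $H(d(x_{n_0+l},p)) \le \frac{2}{r_k+1} = \frac{1}{\beta_H(k)+1}$, and the defining property of $\beta_H$ gives $d(x_{n_0+l},p) \le \frac{1}{k+1}$ for every $l$.

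Given this core fact, the three conclusions follow. For the distance estimate, $\text{dist}(x_n,\text{zer}\,f) \le d(x_n,p) \le \frac{1}{k+1}$ for every $n \ge n_0$, hence for every $n \ge \Psi(k)$. For the Cauchy property, apply the core fact with $k$ replaced by $2k+1$: any $m,\tilde m \ge \Psi(2k+1) \ge n_0$ satisfy $d(x_m,p),\,d(x_{\tilde m},p) \le \frac{1}{2k+2}$, and the triangle inequality yields $d(x_m,x_{\tilde m}) \le \frac{1}{k+1}$. Finally, if $X$ is complete then the Cauchy property produces a limit $x^*$; closedness of $\text{zer}\,f$ combined with $\text{dist}(x_n,\text{zer}\,f)\to 0$ forces $x^* \in \text{zer}\,f$; and letting $\tilde m \to \infty$ in the Cauchy estimate (by continuity of $d$) gives $d(x_m,x^*) \le \frac{1}{k+1}$ for $m \ge \Psi(2k+1)$.

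I expect no real obstacle beyond careful bookkeeping: the argument is a nested unfolding of the four moduli $\alpha_G$, $\beta_H$, $\rho$, $\mu$. The one conceptually delicate point is exploiting monotonicity of $\A$ in its second argument to reconcile the large parameter $M_k$ (demanded by $\mu$ when we want to extract $p$ close enough to $x_{n_0}$) with the smaller parameter $\rho(r_k)$ required to activate the uniform Fej\'er property; aside from this the computation is a direct chase through the definitions.
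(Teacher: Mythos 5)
Your proof is correct and follows essentially the same route as the paper's: use $\Phi$ at $\max\{\rho(2\beta_H(k)+1),\mu(\max\{\alpha_G(2\beta_H(k)+1),\rho(2\beta_H(k)+1)\})\}$ to get $n_0\le\Psi(k)$ with the $\A$-condition, use $\mu$ to extract a nearby $p\in\text{zer}\,f$, then chase through $\rho$, $\alpha_G$, $\beta_H$ to bound all later iterates near $p$, and finish with the triangle inequality and the standard completeness/closedness argument. The only nitpick is that Definition~\ref{fejer2} requires the strict inequality $d(x_{n_0},p)<\frac{1}{\rho(r_k)+1}$, which the modulus of regularity does deliver (since $\text{dist}(x_{n_0},\text{zer}\,f)<\frac{1}{K_k+1}$ yields a $p$ with strict inequality), so your ``$\le$'' should be sharpened to ``$<$''.
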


\begin{proof}
Let $k\in\N$ be given. By the definition of $\Phi$ there exists an $n\le\Psi(k)$ s.t.
\[
(1)\ \A(n,\rho(2\beta_H(k)+1))\wedge |f(x_n)|<\frac{1}{\mu(\max\{ \alpha_G
(2\beta_H(k)+1),\rho(2\beta_H(k)+1)\})+1}.
\]
Hence by $\mu$-definition
\[
(2)\ \exists p\in \text{zer}\;f \left(d(x_n,p)<\frac{1}{\max\left\{\alpha_G(2\beta_H(k)+1),\rho(2\beta_H(k)+1)\right\}+1}
\right).
\]
By the definition of $\rho$ applied to $p$, $(1)$ and $(2)$ imply that
\[
(3)\ \forall l\in\N \left( H(d(x_{n+l},p))\le G(d(x_n,p))+\frac{1}{2\beta_H(k)+2}\stackrel{(2),\alpha_G\mbox{-def}}{\le}\frac{1}{\beta_H(k)+1}\right).
\]
Hence by the definition of $\beta_H$
\[
(4)\ \forall l\in\N\,\left( d(x_{n+l},p)\le \frac{1}{k+1}\right)
\]
and so 
\[
(5)\ \forall m,\tilde{m}\ge n\,\left( d(x_m,x_{\tilde{m}})\le d(x_m,p)+d(x_{\tilde{m}},p) \le\frac{2}{k+1}\right).
\]
$(4)$ and $(5)$ establish the first two claims.\\ 
For the remaining claim we argue as in the proof of \cite[Theorem~4.1]{K-Lo-N}: let $X$ be complete and $\text{zer}\;f$ be closed. Then by the above $z':=\lim x_n$ exists and by the Cauchy rate $(5)$ we get that 
\[
(6)\ \forall m\ge\Psi(2k+1) \left( d(x_m,z')\le\frac{1}{k+1}\right).
\]
Together with $(4)$ this yields
\[
(7)\ \text{dist}(z',\text{zer}\;f)\le \text{dist}(x_m,\text{zer}\;f)+d(z',x_m) \le \frac{1}{2(k+1)}+\frac{1}{k+1}< \frac{2}{k+1}.
\]
As $k$ was arbitrary, we obtain that dist$(z',\text{zer}\;f)=0$ which implies that $z'\in\text{zer}\;f$ as $\text{zer}\;f$ is closed. 
\end{proof}

\begin{remark}\rm
Note that in Theorem \ref{theorem-fejer2}, $X$ is not required to be compact.
\end{remark}

\begin{application}\label{application2}\rm 
We now show that the rate of convergence for the Dykstra cyclic projection algorithm, recently obtained in \cite{Pinto23} under a metric regularity assumption in the case of an arbitrary real Hilbert space $H,$ is covered by Theorem \ref{theorem-fejer2} (but not by Theorem 4.1 in \cite{K-L-N} as this algorithm is not $(G,H)$-Fej\'er monotone but only uniformly locally $\A$-relativized so for a suitable $\A$): we use the notations from Application \ref{application1}. In \cite[Section~4]{Pinto23} it is assumed that $C_1,\ldots,C_N$ are metrically regular with a modulus $\mu:\N^2\to\N$ in the sense of \cite[Definition~4.6]{K-Lo-N} (we switch here from the $\eps/\delta$-formulation used in \cite{K-Lo-N} to $\frac{1}{k+1}$ instead of $\eps$ etc.\ to fit the notations used in \cite{K-L-N} and above), i.e. 
\[
\forall k,r\in\N\,\forall x\in\overline{B}(z,b)
\left(\bigwedge^N_{i=1}\| x-P_ix\|< \frac{1}{\mu_r(k)+1}\to 
\exists p\in C\, \left(\| x-p\|<\frac{1}{k+1}\right)\right).
\]
As shown in the proof of \cite[Corollary~4.8]{K-Lo-N} (and the sentence after that proof), such a $\mu_r$ is a modulus of regularity for $\text{zer}\; f$ and $\overline{B}(z,b)$, where 
\[
f(x):=\max\limits_{i=1,\ldots,N} \| x-P_ix\|.
\] 
Note that $C=\text{zer}\;f$.\\ 
We also define $G(a):=H(a):=a^2$ and may take as in \cite[Lemma~7.10]{K-L-N}
\[
\alpha_G(k):=\lceil\sqrt{k}\rceil, \ \beta_H(k):=k^2.
\]
Now define
\[
A(n,r):=\left(\sum^{n}_{k=n-N+1}\langle x_k-x_n,q_k\rangle <\frac{1}{r+1}\right).
\]
Implicitly in the proof of \cite[Theorem~4.2]{Pinto23}, an $C/\A$-bound $\Phi$ for $(x_n)$ is constructed.\footnote{In the notation of \cite{Pinto23} this bound can be taken as $\alpha(b,N,\eps,\Phi(b,N,\eps,\cdot))+\Phi(b,N,\eps,\alpha(\ldots))$ with $\eps:=1/(r+1)$.}\\
Define $\rho(r):=8b(r+1)-1$. It remains to show that $(x_n)$ is locally $\A$-relativized $(G,H)$-Fej\'er monotone (with $G,H,\A$ as above) w.r.t.\ $C$ with modulus $\rho$: let $p\in C, r\in\N$ and $n\in\N$ be such that
\[
\| x_n-p\|< \frac{1}{\rho(r)+1} \wedge \A(n,\rho(r)).
\]
By $(7)$ in \cite{Pinto23} one has for all $l\ge n$
\begin{align*}
\| x_l-p\|^2 &\le \| x_n-p\|^2+2 \underbrace{\sum^n_{k=n-N+1}\langle x_k-p,q_k
\rangle}_{\stackrel{!}{\le} 1/(2r+2)} -2\underbrace{\sum^l_{k=l-N+1} \langle x_k-p,q_k
\rangle}_{\ge 0,\ \text{by \cite[\rm L.3.1(iii)]{Pinto23}}}\\ 
&\le \| x_n-p\|^2+\frac{1}{r+1}.
\end{align*}
Here `!' follows from $\| x_n-p\|<\frac{1}{8b(r+1)}$ and $\sum\limits^n_{k=n-N+1}\langle x_k-x_n,q_k\rangle <\frac{1}{4(r+1)}$ by reasoning as in \cite[p.17]{Pinto23}:
\begin{align*}
 \sum\limits^n_{k=n-N+1}\langle x_k-p,q_k\rangle&= \sum\limits^n_{k=n-N+1}\langle x_k-x_n,q_k\rangle +
\underbrace{\langle x_n-p,\sum\limits^n_{k=n-N+1} q_k \rangle}_{=\langle x_n-p,x_0-x_n\rangle,\ \text{by \cite[\rm L.3.1(ii)]{Pinto23}}}\\ 
&\le \sum\limits^n_{k=n-N+1}\langle x_k-x_n,q_k\rangle + \| x_n-p\|\cdot\| x_0-x_n\|\\
&\le
\frac{1}{4(r+1)}+2b\cdot \frac{1}{8b(r+1)} =\frac{1}{2(r+1)}.
\end{align*}

\end{application}

\mbox{ } 
 
We now discuss a simple argument which in some sense gives a reversal to Theorem~\ref{theorem-fejer2}. Fix a function $I:\N\times X\to X$ standing for some iterative process employed to approximate zeros of $f$. Consider the following natural condition on $I$ (see remark~\ref{remark-I} below):
\begin{equation}\label{hypothesis-I}\tag{$+$}
	\forall n\in\N\ \forall p\in \text{zer}\;f \left( I(n,p)=p \right),
\end{equation}
stating that the iterative process leaves the points in $\text{zer}\;f$ unchanged (such iterations are called rectractive in \cite{Neumann}). Consider a sequence $(x_n)$ recursively defined by the function $I$, i.e.\ $x_0\in X$ and $x_{n+1}:=I(n,x_n)$. Therefore, the assumption \eqref{hypothesis-I} on $I$ entails,
\begin{equation}\label{hypothesis-II}\tag{$++$}
	\forall x_0\in X \left( x_0\in\text{zer}\;f \to \forall n\in\N \left(x_n=x_0\right) \right).
\end{equation}

\begin{definition}
We say that a function $\tau:\N^2\to\N$ is a modulus of coincidence for $I$ if it satisfies
\[
\forall n\in\N\, \forall k\in\N\, \forall x_0\in X \left( |f(x_0)|<\frac{1}{\tau(k,n)+1}\to d(x_n,x_0)< \frac{1}{k+1} \right),
\]
where $(x_n)$ is initiated at $x_0$ and recursively defined by $I$.
\end{definition}

\begin{remark}\label{remark-I} \rm 
Most iterative methods in optimization satisfy the condition~\eqref{hypothesis-I}. For example, in a normed linear space with $f(x)=\|x-T(x)\|$ for a nonexpansive map $T$, if $I(n,x)$ is defined as
\begin{enumerate}
	\item $T(x)$ -- Picard iteration,
	\item $\alpha_nx+(1-\alpha_n)T(x)$, for $(\alpha_n)\subseteq [0,1]$ -- Krasnoselski-Mann iteration,
	\item $\alpha_nx+(1-\alpha_n)T(\beta_nx+(1-\beta_n)T(x))$, for $(\alpha_n), (\beta_n) \subseteq [0,1]$ -- Ishikawa iteration,
\end{enumerate}
then the condition \eqref{hypothesis-I} is satisfied. Another well-known iterative method is the Halpern iteration, $I(n,x)=\alpha_nu+(1-\alpha_n)T(x)$. Despite not satisfying \eqref{hypothesis-I}, it still satisfies \eqref{hypothesis-II} whenever $u=x_0$. One easily checks that for the Ishikawa iteration $\tau(k,n):=2n(k+1)$ gives a modulus of coincidence, and for all the other cases (including the Halpern iteration) one can take $\tau(k,n):=n(k+1)$. Naturally, if one imposes further conditions on the parameter sequences $(\alpha_n)$, $(\beta_n)$, then the definition of the function $\tau$ may be improved.
\end{remark}

We have the following easy result.
\begin{proposition}\label{reversal}
Consider a function $I$ as above, and let $\tau$ be a modulus of coincidence for $I$. Given $z\in\text{zer}\;f$ and $b>0$, assume that for any $x_0\in \overline{B}(z,b)$ the sequence initiated at $x_0$ and recursively defined by $I$ converges to a point $\ell_I(x_0)\in\text{zer}\,f$ with a common rate of convergence $\Psi:\N\to\N$, i.e.
\[
d(x_0,z)\leq b \to \forall k\in\N\, \forall n\geq \Psi(k) \left( d(x_n,\ell_I(x_0))\leq \frac{1}{k+1}\right).
\]
Then, the function $\mu(k):=\tau(2k+1,\Psi(2k+1))$ is a modulus of regularity for $f$ w.r.t.\ $\text{zer}\;f$ and $\overline{B}(z,b)$.
\end{proposition}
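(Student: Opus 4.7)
The plan is to take an arbitrary $x_0 \in \overline{B}(z,b)$ satisfying $|f(x_0)| < \frac{1}{\mu(k)+1}$ and to show that $\text{dist}(x_0,\text{zer}\,f) < \frac{1}{k+1}$, by producing an explicit element of $\text{zer}\,f$ within distance $\frac{1}{k+1}$ of $x_0$, namely the limit $\ell_I(x_0)$ of the iteration starting at $x_0$. The natural splitting is $\frac{1}{k+1}=\frac{1}{2k+2}+\frac{1}{2k+2}$, which dictates the choice of feeding $2k+1$ into both $\tau$ and $\Psi$ (hence the exact form of $\mu$).

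Concretely, I would proceed as follows. Let $(x_n)$ denote the sequence initiated at $x_0$ and recursively defined by $I$, and set $N:=\Psi(2k+1)$. From the assumption $|f(x_0)| < \frac{1}{\mu(k)+1} = \frac{1}{\tau(2k+1,N)+1}$, the modulus of coincidence applied with arguments $2k+1$ and $N$ yields
\[
d(x_N, x_0) < \frac{1}{2k+2}.
\]
On the other hand, since $x_0 \in \overline{B}(z,b)$, the common rate of convergence $\Psi$ applied at the index $N \geq \Psi(2k+1)$ gives
\[
d(x_N, \ell_I(x_0)) \leq \frac{1}{2k+2}.
\]

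Combining these via the triangle inequality and using that $\ell_I(x_0)\in\text{zer}\,f$,
\[
\text{dist}(x_0,\text{zer}\,f) \leq d(x_0,\ell_I(x_0)) \leq d(x_0,x_N)+d(x_N,\ell_I(x_0)) < \frac{1}{2k+2}+\frac{1}{2k+2} = \frac{1}{k+1},
\]
which is precisely the defining implication of a modulus of regularity for $f$ w.r.t.\ $\text{zer}\,f$ and $\overline{B}(z,b)$.

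I do not anticipate any real obstacle: the proof is a one-shot triangle inequality, and the only genuine design decision is the placement of the factor $2$ inside $\mu$, which is forced by the need to make both contributions equal to $\frac{1}{2k+2}$. Neither condition~\eqref{hypothesis-I} nor~\eqref{hypothesis-II} is invoked directly in this argument; they only serve as background motivating why $\ell_I(x_0)$ should belong to $\text{zer}\,f$, which is however already built into the hypothesis on the rate $\Psi$.
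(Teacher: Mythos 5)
Your proposal is correct and follows essentially the same argument as the paper: apply the modulus of coincidence at $(2k+1,\Psi(2k+1))$ to get $d(x_{\Psi(2k+1)},x_0)<\frac{1}{2(k+1)}$, the common rate $\Psi$ to get $d(x_{\Psi(2k+1)},\ell_I(x_0))\le\frac{1}{2(k+1)}$, and conclude by the triangle inequality that $\mathrm{dist}(x_0,\text{zer}\,f)\le d(x_0,\ell_I(x_0))<\frac{1}{k+1}$. The remark that \eqref{hypothesis-I} and \eqref{hypothesis-II} are not needed in the argument itself is also consistent with the paper's proof.
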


\begin{proof}
Let $k\in\N$ and $x\in\overline{B}(z,b)$ be given, and assume that $|f(x)|< \frac{1}{\mu(k)+1}$. Consider $(x_n)$ to be the sequence recursively defined by $I$ with initial point $x_0=x$. By the hypothesis of $\Psi$, we have $d(x_{\Psi(2k+1)},\ell_I(x))\leq \frac{1}{2(k+1)}$. On the other hand, the assumption on $\tau$ entails $d(x_{\Psi(2k+1)},x)< \frac{1}{2(k+1)}$. We conclude that
\[
\text{\rm dist}(x,\text{zer}\,f)\leq d(x,\ell_I(x))\leq d(x,x_{\Psi(2k+1)}) + d(x_{\Psi(2k+1)},\ell_I(x))< \frac{1}{k+1},
\]
and hence $\mu$ is a modulus of regularity for $f$ w.r.t.\ $\text{zer}\,f$ and $\overline{B}(z,b)$.
\end{proof}

\begin{remark}\rm 
Theorem~\ref{theorem-fejer2} states that the assumption of a modulus of regularity in the case of a sequence which is uniformly locally $\A$-relativized $(G,H)$-Fej\'er monotone w.r.t. $F=\text{zer}\;f$, and has approximate $F/\A$-points, will converge (when $X$ is complete and $\text{zer}\;f$ is closed) to a point in $\text{zer}\;f$ with a uniform rate of convergence. Proposition~\ref{reversal} above states that, in most iterative methods, if such uniform rate of convergence is to exist, then the assumption of metric regularity is indeed necessary. This argument was used in \cite[Proposition~4.4]{K-Lo-N} in the context of the Picard iteration. It was also used recently in \cite[Proposition~4.7]{Pinto23}, in the context of Dykstra's method for solving the convex feasibility problem. In this iterative method, the computation of the function $\tau$ is more convoluted and was given in \cite[Propositions~4.5 and 4.6]{Pinto23}.
\end{remark}

\section{Uniform  locally $\A$-relativized $(G,H)$-Fej\'er monotone \\ sequences w.r.t. approximate $F$-points}

We now generalize the concept of `uniform $(G,H)$-Fej\'er monotonicity' introduced in \cite[Definition~4.6]{K-L-N}:

\begin{definition} \label{fejer3}
We say that $(x_n)$ is {\em uniformly locally $\A$-relativized $(G,H)$-Fej\'er 
monotone} w.r.t. approximate $F$-points if for all $r, n,m\in\N$ with moduli $\chi:\N^3\to\N, \rho:\N\to\N$ if   
\[
\ba{l}
\forall r,n,m\in\N\,\forall p\in X \\
\left( p\in AF_{\chi(n,m,r)}\wedge d(x_n,p)<\frac{1}{\rho(r)+1}\wedge \A(n,\rho(r))\to \forall l\le m \left(H(d(x_{n+l},p))\le G(d(x_n,p)) + \frac1{r+1} \right)\right)\!.
\ea 
\]
\end{definition}

\begin{remark} \rm 
If is uniformly $(G,H)$-Fej\'er monotone w.r.t. $F$ with modulus $\chi$ in the sense of \cite[Definition~4.6]{K-L-N}, then $(x_n)$ is uniformly locally $\A$-relativized $(G,H)$-Fej\'er monotone w.r.t.\ approximate $F$-points with moduli $\chi$ and an arbitrary $\rho:\N\to\N$ for any property $\A$.  
\end{remark}

\begin{example}\label{application3} \rm
In the setting of Application~\ref{application1} for the case of Dykstra's algorithm, we can obtain moduli $\chi$, $\rho$ in the sense of Definition~\ref{fejer3}. Indeed, for each $r,n,m\in\N$ define
\[
\rho(r):=4(2b+1)(r+1)-1\ \text{and}\ \chi(n,m,r):=8b(n+m)(r+1) \remin 1,
\]
where $\N\ni b\geq \|z-x_0\|$ for some $z\in C$, as before. Assume that $\|x_n-p\|\leq \frac{1}{\rho(r)+1}$,
\[
p\in AF_{\chi(n,m,r)},\ \text{i.e.}\ \max_{i=1,\dots,N}\|p-P_i(p)\|\leq \frac{1}{\chi(n,m,r)+1},\ \text{and}\ \sum_{k=n-N+1}^n \langle x_k-x_n, q_k\rangle\leq \frac{1}{\rho(r)+1}
\]
for given $r,m,n\in \N$ and $p\in X$.\footnote{Recall that the last conjunct stands for the property $\A(n,\rho(r))$.} Then, similarly to the proof of \cite[Theorem~3.11]{Pinto23}, we have for $G(a)=H(a)=a^2$ and any $\ell\leq m$,
\[
H(\|x_{n+\ell}-p\|)\leq G(\|x_n-p\|)+2 \bigg( \underbrace{\sum_{k=n-N+1}^{n} \langle x_k-p,q_k\rangle}_{=:t_1} + \underbrace{\sum_{k=n+\ell-N+1}^{n+\ell} \langle p -x_k, q_k\rangle}_{=:t_2} \bigg).
\]
Then,
\begin{align*}
t_1&=\sum\limits_{k=n-N+1}^n \langle x_k-x_n, q_k\rangle + \sum\limits_{k=n-N+1}^n \langle x_n-p,q_k\rangle\\[1mm]
&\hspace{-9mm}\stackrel{\text{\cite[\rm L.3.1(ii),L.3.4]{Pinto23}}}{\leq} \frac{1}{\rho(r)+1} + 2b\|x_n-p\|\leq \frac{2b+1}{\rho(r)+1}=\frac{1}{4(r+1)}
\end{align*}
and (with $P_k(p)$ being an arbitrary point for negative $k$)
\begin{align*}
t_2&=\sum\limits_{k=n+\ell-N+1}^{n+\ell} \langle p-P_k(p), q_k\rangle + \sum\limits_{k=n+\ell-N+1}^{n+\ell} \underbrace{\langle P_k(p)-x_k, q_k\rangle}_{\leq 0,\ \text{by \cite[\rm L.3.1(iii)]{Pinto23}}}\\[1mm]
&\leq \sum\limits_{k=n+\ell-N+1}^{n+\ell} \|p-P_k(p)\|\cdot\|q_k\|
\stackrel{\text{\cite[\rm L.3.2]{Pinto23}}}{\leq} \frac{1}{\chi(n,m,r)+1}\sum\limits_{k=0}^{n+\ell-1}\|x_k-x_{k+1}\|\\[4mm]
&\leq \frac{2b(n+\ell)}{\chi(n,m,r)+1}\leq \frac{1}{4(\ell+1)}.
\end{align*}
Therefore $2(t_1+t_2)\leq \frac{1}{r+1}$, which concludes the proof.
\end{example}

The next theorem generalizes \cite[Theorem~5.1]{K-L-N} to sequences which are uniformly locally $\A$-relativized $(G,H)$-Fej\'er monotone w.r.t.\ approximate $F$-points:
\begin{theorem}\label{theorem-fejer3}
Let $(X,d)$ be a totally bounded metric space and $\gamma$ be a modulus of total boundedness for $X$ in the sense of {\rm \cite[Definition~2.2]{K-L-N}}. Assume that
\be
\item $(x_n)$ is uniformly locally $\A$-relativized $(G,H)$-Fej\'er monotone w.r.t. approximate $F$-points,  with moduli $\chi,\rho$;
\item $(x_n)$ has approximate $F/\A$-points, with $\Phi$ being a nondecreasing approximate $F/\A$-point bound.
\ee
Then $(x_n)$ is Cauchy and, moreover, for all $k\in\N$ and all $g:\N\to\N$
\[
\exists N\le \Psi(k,g,\Phi,\chi,\rho,\alpha_G,\beta_H,\gamma) \,\forall i,j\in [N,N+g(N)] 
\left(d(x_i,x_j)\le \frac1{k+1}\right),
\]
where $\Psi(k,g,\Phi,\chi,\rho,\alpha_G,\beta_H,\gamma):=\Psi_0(P-1):=\Psi_0(P-1,k,g,\Phi,\chi,\rho,\beta_H)$, with\\[1mm] 
$P:=\gamma\left( \max\{ \alpha_G\left(2\beta_H(2k+1)+1\right),\rho(2\beta_H(2k+1)+1)\}\right)$ 
and
\[
\ba{l}
\Psi_0(0):=\Phi(\rho(2\beta_H(2k+1)+1)),\\[1mm] 
\Psi_0(n+1):=\Phi\left(\max\{ \chi^M_g\left(\Psi_0(n),2\beta_H(2k+1)+1\right),\rho(2\beta_H(2k+1)+1)\}\right)
\ea
\]
with
\[
\chi_g(n,k):=\chi(n,g(n),k),\quad  \chi^M_g(n,k):=\max\{ \chi_g(i,k) \mid i\le n\}.
\]
\end{theorem}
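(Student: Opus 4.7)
Abbreviate $r^*:=2\beta_H(2k+1)+1$ and $K:=\max\{\alpha_G(r^*),\rho(r^*)\}$, so that $P=\gamma(K)$. The plan is to follow the architecture of the proof of \cite[Theorem~5.1]{K-L-N}: iterate the approximate $F/\A$-bound $\Phi$ to produce a list of ``checkpoints'' $n_i:=\Psi_0(i)$ for $i=0,\dots,P-1$; use the modulus of total boundedness $\gamma$ to pigeonhole two of these checkpoints so that $(x_n)$ takes nearby values there; and finally invoke Definition~\ref{fejer3} to promote that closeness into the desired metastable Cauchy estimate.

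The first task is to extract from the recursion defining $\Psi_0$ exactly what each $n_i$ buys us. From the base case, $x_{n_0}\in AF_{\rho(r^*)}$ and $\A(n_0,\rho(r^*))$; from the recursive step, $x_{n_{i+1}}\in AF_{\chi^M_g(n_i,r^*)}$ and $\A(n_{i+1},\rho(r^*))$. Since $\Phi$ is nondecreasing, so is $(n_i)$; combined with the monotonicity of $AF_M$ in $M$, the monotonicity of $\A$ in its second argument, and the running-max definition of $\chi^M_g$, we obtain for every $0\le i<j\le P-1$
\[
x_{n_j}\in AF_{\chi(n_i,g(n_i),r^*)}\quad\text{and}\quad \A(n_i,\rho(r^*)).
\]
The uniformity of the first conjunct in $i$ is the crucial piece of bookkeeping and is precisely the reason the running maximum $\chi^M_g$ appears in $\Psi_0$ rather than simply $\chi_g(\Psi_0(i-1),\cdot)$. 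I expect this bookkeeping to be the main step requiring care.

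Applying the pigeonhole provided by $\gamma$ to $x_{n_0},\dots,x_{n_{P-1}}$ yields indices $0\le i<j\le P-1$ with $d(x_{n_i},x_{n_j})\le \tfrac{1}{K+1}$. Invoke Definition~\ref{fejer3} with $n:=n_i$, $m:=g(n_i)$, $r:=r^*$, $p:=x_{n_j}$: the previous paragraph supplies $p\in AF_{\chi(n,m,r)}$ and $\A(n,\rho(r))$, while $K\ge\rho(r^*)$ supplies the distance hypothesis. This yields
\[
H(d(x_{n_i+\ell},p))\le G(d(x_{n_i},p))+\tfrac{1}{r^*+1}\quad\text{for every }\ell\le g(n_i).
\]
Because also $K\ge\alpha_G(r^*)$, the defining property of $\alpha_G$ bounds $G(d(x_{n_i},p))\le\tfrac{1}{r^*+1}$, so the right-hand side is at most $\tfrac{2}{r^*+1}=\tfrac{1}{\beta_H(2k+1)+1}$; the defining property of $\beta_H$ then gives $d(x_{n_i+\ell},p)\le\tfrac{1}{2k+2}$ for all $\ell\in[0,g(n_i)]$. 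The triangle inequality now yields $d(x_u,x_v)\le\tfrac{1}{k+1}$ for every $u,v\in[n_i,n_i+g(n_i)]$, so $N:=n_i\le n_{P-1}=\Psi_0(P-1)$ witnesses the asserted metastability; Cauchyness of $(x_n)$ follows classically since the metastable statement holds for every $g$.
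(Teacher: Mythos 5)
Your overall architecture is the right one (it is essentially the paper's: checkpoints built by iterating the bound, pigeonhole via $\gamma$, then Definition~\ref{fejer3} plus $\alpha_G,\beta_H$), but there is a genuine gap at the very step you yourself flag as ``the main step requiring care''. You set $n_i:=\Psi_0(i)$, where $\Psi_0$ is built from $\Phi$, and then assert that $x_{n_0}\in AF_{\rho(r^*)}$, $x_{n_{i+1}}\in AF_{\chi^M_g(n_i,r^*)}$ and $\A(n_i,\rho(r^*))$. This does not follow: $\Phi$ is only an approximate $F/\A$-\emph{bound}, i.e.\ it guarantees $\exists n\le \Phi(k)\,(x_n\in AF_k\wedge \A(n,k))$, and there is no reason why the point $x_{\Phi(k)}$ itself should lie in $AF_k$ or satisfy $\A(\Phi(k),k)$ (take any $\Phi$ that overshoots the least witness). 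So the list of properties you ``extract from the recursion defining $\Psi_0$'' is false in general, and with it both your analogue of the paper's Claim~1 and the hypothesis $\A(n_i,\rho(r^*))$ needed to invoke Definition~\ref{fejer3}.

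The paper repairs exactly this point: it introduces $\varphi(k):=\min m\,(x_m\in AF_k\wedge \A(m,k))$, which \emph{is} a witness function, runs the checkpoint recursion with $\varphi$ (so $n_i:=\Psi_0(i,k,g,\varphi,\chi,\rho,\beta_H)$ really does satisfy the membership and $\A$-properties, and is nondecreasing in $i$, which Claim~1 also uses), and only at the end transfers the bound via $\varphi\le\Phi$ and the monotonicity of $\Psi_0$ in its function argument to get $n_I\le \Psi_0(P-1,k,g,\Phi,\chi,\rho,\beta_H)=\Psi$. Your argument needs this (or an equivalent ``take an actual witness below $\Phi(\cdot)$ at each stage'') restructuring; once done, the rest of your sketch goes through. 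A secondary slip: the modulus of total boundedness applied at level $K$ gives indices $0\le I<J\le \gamma(K)=P$ among a \emph{sequence} (so you should feed it $n_0,\dots,n_P$, not only $n_0,\dots,n_{P-1}$); the final bound is then saved by $I<J\le P$ forcing $I\le P-1$, as in the paper, rather than by your claim that two of only $P$ points must already be close.
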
 

\begin{proof}
We follow closely the proof of \cite[Theorem~5.1]{K-L-N} with some decisive changes though. Let $k\in\N$ and $g:\N\to\N$. Define
\[
\varphi(k):=\min m\, \left(x_m\in AF_k \wedge \A(m,k)\right).
\] 
Both $\varphi$ and $\Phi$ are nondecreasing and $\Phi$ is a pointwise bound on $\varphi$. By induction we readily prove that $\Psi_0(n,k,g,\varphi,\chi,\rho,\beta_H)\leq \Psi_0(n+1,k,g,\varphi,\chi,\rho,\beta_H)$,  $\Psi_0(n,k,g,\Phi,\chi,\rho,\beta_H)\leq \Psi_0(n+1,k,g,\Phi,\chi,\rho,\beta_H)$ and
$\Psi_0(n,k,g,\varphi,\chi,\rho,\beta_H) \leq \Psi_0(n,k,g,\Phi,\chi,\rho,\beta_H)$ for all $n\in\N$. 

Define for every $i\in\N$ 
\[
n_i:=\Psi_0(i,k,g,\varphi,\chi,\rho,\beta_H).
\]

{\bf Claim 1:} For all $j\ge 1$ and all $0\le i<j$,  $x_{n_j}$ is a $\chi_g(n_i,2\beta_H(2k+1)+1)$-approximate $F$-point.\\

{\bf Proof of claim:}  As $j\ge 1$ and
\begin{align*} 
n_j&= \Psi_0(j,k,g,\varphi,\chi,\rho,\beta_H)\ge \varphi\left(\chi^M_g\left(\Psi_0(j-1,k,g,\varphi,\chi,\rho,\beta_H),2\beta_H(2k+1)+1\right)\right)\\
&= \varphi\left(\chi^M_g\left(n_{j-1},2\beta_H(2k+1)+1\right)\right),
\end{align*}
$x_{n_j}$ is a $\chi^M_g(n_{j-1}, 2\beta_H(2k+1)+1)$-approximate $F$-point. Since $0\le i\le j-1$, we have that $n_i\leq n_{j-1}$. Apply now the fact that $\chi_g^M$ is nondecreasing in the first argument to get that
\begin{align*}
\chi_g(n_i, 2\beta_H(2k+1)+1) 
&\le  \chi^M_g(n_i, 2\beta_H(2k+1)+1)\\
&\le  \chi^M_g(n_{j-1}, 2\beta_H(2k+1)+1)
\end{align*}
which establishes Claim 1. \hfill $\blacksquare$\\

\noindent{\bf Claim 2:} There exist  $\ds 0\leq I< J \le P$ satisfying 
\[
\forall l\in [n_I,n_I+g(n_I)] \left( d(x_l,x_{n_J})\le \frac1{2k+2}\right).
\]
{\bf Proof of claim:} Utilizing that $\gamma$ is modulus of total boundedness for $X$ we get that there exist  $\ds 0\leq I< J \le P$ with 
\[
(1)\ d(x_{n_I},x_{n_J})\le \min\left\{ \frac{1}{\alpha_G(2\beta_H(2k+1)+1)+1},
\frac{1}{\rho(2\beta_H(2k+1)+1)+1} \right\}.
\] 
Since $\alpha_G$ is a $G$-modulus this, in particular, implies  
\[
(2)\ G(d(x_{n_I},x_{n_J}))\le  \frac1{2\beta_H(2k+1)+2}.
\]
By the first claim, we have that $x_{n_J}$ is a $\ds \chi_g(n_I,2\beta_H(2k+1)+1)$-approximate $F$-point. By construction, $n_I$ satisfies $\A(n_I,\rho(2\beta_H(2k+1)+1))$. Applying now the uniform locally $\A$-relativized $(G,H)$-F\'{e}jer monotonicity of $(x_n)$ w.r.t.\ approximate $F$-points with $\ds r:=2\beta_H(2k+1)+1, n:=n_I, m:=g(n_I)$ and  $p:=x_{n_J}$, we get -- 
using $(1)$ again -- that for all $l\le g(n_I)$,
\[
H(d(x_{n_I+l},x_{n_J})) \le G(d(x_{n_I},x_{n_J}))+\frac1{2\beta_H(2k+1)+2} \stackrel{(2)}{\le} \frac1{\beta_H(2k+1)+1}.
\]
Since $\beta_H$ is an $H$-modulus, 
\[
\forall l\le g(n_I) \left( d(x_{n_I+l},x_{n_J})\le \frac1{2k+2} \right).
\]
which establishes Claim 2. \hfill $\blacksquare$\\

\noindent From Claim 2 it is immediate that 
\[
\forall k,l\in [n_I,n_I+g(n_I)] \left( d(x_k,x_l)\le \frac1{k+1}\right).
\]
As $n_I=\Psi_0(I,k,g,\varphi,\chi,\rho,\beta_H)\leq \Psi_0(I,k,g,\Phi,\chi,\rho,\beta_H)$ and $I\leq P-1$, we get that
\[
n_I\le \Psi_0(P-1,k,g,\Phi,\chi,\rho,\beta_H)=\Psi(k,g,\Phi,\chi,\rho,\alpha_G,\beta_H,\gamma).
\]
The theorem holds with $N:=n_I$.
\end{proof} 

{\bf Corollary to the proof:} One of the numbers $n_0,\ldots,n_{P-1}$ is a point of metastability. 

\begin{application}\rm
As discussed already at the end of Application~\ref{application1}, this relativized notion of Fej\'er monotonicity allows for a simple convergence proof of Dykstra's algorithm in the finite dimensional case. In light of the moduli $\chi$ and $\rho$ in Example~\ref{application3} (and also the $C/\A$-bound $\Phi$ implicit in \cite[Theorem~4.2]{Pinto23}, cf.\ footnote 1), by Theorem~\ref{theorem-fejer3} we moreover obtain rates of metastability for Dykstra's iteration.
\end{application}

%\mbox{ } 

\noindent
{\bf Acknowledgments:} \\[1mm] 
The authors were supported by the German Science Foundation (DFG 
Project KO 1737/6-2).

\end{document}